\newtheorem{thm}{Theorem}[section]
\newtheorem*{thm*}{Theorem}
\newtheorem{lem}[thm]{Lemma}
\newtheorem{prop}[thm]{Proposition}
\newtheorem{cor}[thm]{Corollary}
\newtheorem*{conj*}{Conjecture}
\newtheorem{question}[thm]{Question}
\newtheorem*{question*}{Question}
\theoremstyle{remark}
\newtheorem{rem}[thm]{Remark}
\theoremstyle{definition}
\newtheorem{defn}[thm]{Definition}
\newcommand{\MCM}{\opname{MCM}\nolimits}
\newcommand{\krdim}{\opname{kr.dim}\nolimits}
\newcommand{\ul}[1]{\underline{#1}}
\newcommand{\opname}[1]{\operatorname{\mathsf{#1}}}
\renewcommand{\mod}{\opname{mod}\nolimits}
\newcommand{\Z}{\mathbb{Z}}
\newcommand{\C}{\mathbb{C}}
\newcommand{\id}{\mathrm{id}}
\newcommand{\Hom}{\opname{Hom}}
\newcommand{\Ext}{\opname{Ext}}
\newcommand{\ExtCheck}{\opname{\check{E}xt}}
\newcommand{\ExtTV}{\opname{\widehat{Ext}}}
\newcommand{\GL}{\opname{GL}}
\newcommand{\Perf}{\opname{Perf}}
\newcommand{\gldim}{\opname{gldim}\nolimits}
\newcommand{\cf}{{\mathcal F}}
\newcommand{\del}{\partial}
\newcommand{\mm}[1]{#1}
\numberwithin{equation}{section}
\begin{document}

\title{Classifying dg-categories of matrix factorizations}

\author{Martin Kalck}
\email{martin.maths@posteo.de}

\begin{abstract}
We give a complete classification of differential $\Z$-graded homotopy categories of matrix factorizations of isolated singularities up to quasi-equivalence. This answers a question of Bernhard Keller and Evgeny Shinder.

More generally, we show that a quasi-equivalence between the dg singularity category of a Gorenstein isolated singularity $R$ and the dg singularity category of a complete local Noetherian $\C$-algebra $S$ of \emph{different} Krull dimension can always be realized by Kn{\"o}rrer's periodicity -- in particular, the existence of such an equivalence implies that $R$ and $S$ are hypersurface singularities. This uses and is complemented by a recent categorical version of the Mather--Yau theorem for hypersurfaces of the same Krull dimension due to Hua \& Keller, which completes the classification mentioned above.
\end{abstract}

\maketitle

\thispagestyle{empty}

\section{Introduction}
\noindent
Matrix factorizations can be traced back to Dirac's seminal description of the electron taking both quantum theory and relativity into account, cf.~\cite{Dirac} and also \cite{Murfet}. More recently, they have been studied in relation with, for example, Landau-Ginzburg models in homological mirror symmetry \cite{Orlov2}, the operation of flops in the minimal model program in birational geometry \cite{CurtoMorrison, Wemyss18}, curve-counting invariants \cite{BrownWemyss} used in enumerative geometry and    
the construction of new knot invariants \cite{KR}, cf. also \cite{Murfet}.

Matrix factorizations of a power series $f \in P_n=\C\llbracket z_0, \ldots, z_n\rrbracket$ can be considered as objects of the \emph{homotopy category of matrix factorizations} $[\mathsf{MF}(P_n, f)]$, which has a triangulated structure \cite{Buchweitz, Eis80}. 
A fundamental result in the theory of matrix factorizations relates these categories for the power series $f \in P_n$ and $f + z_{n+1}^2 + z_{n+2}^2 \in P_{n+2}$, \cite{KnorrerCMmodules}.
\begin{thm}[Kn{\"o}rrer 1987]\label{T:Knoerrer}
For $n \in \Z_{\geq 0}$ and $0 \neq f \in P_n$ there is a triangle equivalence
\begin{align}\label{E:Knoerrer}
[\mathsf{MF}(P_n, f) ] \xrightarrow{\sim} [\mathsf{MF}(P_{n+2}, f + z_{n+1}^2 + z_{n+2}^2) ]. 
\end{align}
\end{thm}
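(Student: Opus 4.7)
My plan is to first reduce to the hyperbolic form $xy$, then construct an explicit Kn\"orrer functor, and finally verify it is a triangle equivalence via separate full-faithfulness and essential-surjectivity arguments. Working over $\C$, the linear change of coordinates $x=z_{n+1}+iz_{n+2}$, $y=z_{n+1}-iz_{n+2}$ gives an isomorphism $P_{n+2}\cong P_n\llbracket x,y\rrbracket$ sending $f+z_{n+1}^2+z_{n+2}^2$ to $f+xy$ and inducing an isomorphism of the corresponding homotopy categories of matrix factorizations. It therefore suffices to construct a triangle equivalence
\[
K:[\mathsf{MF}(P_n,f)]\;\xrightarrow{\sim}\;[\mathsf{MF}(P_n\llbracket x,y\rrbracket,\,f+xy)].
\]

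On objects I would set $K(\varphi,\psi)=(\Phi,\Psi)$ with
\[
\Phi=\begin{pmatrix}\varphi & y\cdot\id\\ x\cdot\id & -\psi\end{pmatrix},
\qquad
\Psi=\begin{pmatrix}\psi & y\cdot\id\\ x\cdot\id & -\varphi\end{pmatrix}.
\]
Since $x$ and $y$ are central, a direct $2\times 2$ block computation gives $\Phi\Psi=\Psi\Phi=(f+xy)\cdot\id$, so $(\Phi,\Psi)$ is a matrix factorization of $f+xy$. On morphisms I would extend a chain map $\alpha$ diagonally as $\mathrm{diag}(\alpha,\alpha)$; this assignment respects null-homotopies and the suspension functor on both sides, yielding a $\C$-linear triangle functor.

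Full faithfulness I would establish by direct computation of the $2$-periodic hom complexes. A morphism between $K(\varphi,\psi)$ and $K(\varphi',\psi')$, written as a $2\times 2$ block matrix over $P_n\llbracket x,y\rrbracket$, must commute with multiplication by $xI$ and $yI$; together with an $(x,y)$-adic filtration argument, these constraints reduce the cohomology to that of the hom complex between $(\varphi,\psi)$ and $(\varphi',\psi')$ in the source, with the blocks introduced by the new variables contributing only null-homotopic pieces. This calculation, while technically involved, presents no conceptual difficulty beyond careful bookkeeping.

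Essential surjectivity is the main conceptual obstacle. I would pass to maximal Cohen--Macaulay modules via Buchweitz's equivalence $[\mathsf{MF}(P,g)]\simeq\underline{\mathsf{MCM}}(P/g)$ and show that every MCM module over $S:=P_n\llbracket x,y\rrbracket/(f+xy)$ is, stably, in the image of $K$. The relation $xy=-f$ in $S$ implies $f\cdot(N/yN)=0$, so $N/yN$ is naturally a module over $P_n/f$; analysing the actions of $x$ and $y$ on $N$ then yields a canonical two-step filtration whose associated graded pieces are pulled back from MCM modules over $P_n/f$. Lifting this decomposition and translating back into matrix factorizations produces the required preimage $(\varphi,\psi)$ with $K(\varphi,\psi)\simeq(\Phi,\Psi)$. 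The delicate point is to make the lift canonical enough to invert $K$ on the nose in the stable category, and it is here that the hyperbolic form $xy$ (rather than a single square) is essential, as it provides the two commuting variables needed for the decomposition.
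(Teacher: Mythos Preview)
The paper does not give a proof of this theorem; it is stated with attribution to Kn\"orrer and a citation to his 1987 paper, and is used as a black box throughout. So there is no argument in the paper against which to compare your sketch.

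Your outline is essentially Kn\"orrer's own strategy: reduce over $\C$ to the hyperbolic form $f+xy$, write down the explicit tensor functor, and check equivalence via full faithfulness and essential surjectivity. The functor and the verification $\Phi\Psi=(f+xy)\cdot\id$ are correct, and the plan for full faithfulness, though terse, is the standard one.

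There is one concrete slip in your essential-surjectivity sketch. You claim that $f\cdot(N/yN)=0$ makes $N/yN$ ``naturally a module over $P_n/f$''. But $N/yN$ is a module over $S/yS=P_n\llbracket x,y\rrbracket/(f+xy,y)\cong P_n\llbracket x\rrbracket/(f)\cong(P_n/(f))\llbracket x\rrbracket$; the variable $x$ has not gone anywhere. To descend to $P_n/(f)$ one needs an additional ingredient, namely that every MCM module over $(P_n/(f))\llbracket x\rrbracket$ is extended from an MCM module over $P_n/(f)$ (this uses that $x$ is a nonzerodivisor and that the base is complete). That lemma is precisely what makes your closing remark about needing ``two commuting variables'' actually work, but as written your filtration argument skips it and so does not yet land in the correct module category.
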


\noindent
In particular, this gives a bijection between matrix factorizations of $f$ and $f + z_{n+1}^2 + z_{n+2}^2$ up to sums of trivial matrix factorizations. 

Homotopy categories of matrix factorizations fit into the more general framework of \emph{triangulated singularity categories} $D^{sg}(R) = D^b(R)/\mathsf{Perf}(R)$, by Buchweitz \cite{Buchweitz}, Eisenbud~\cite{Eis80}.

\begin{thm}
\label{T:BuchweitzEisenbud}
For $n \in \Z_{\geq 0}$ and $0 \neq f \in P_n$ there is a triangle equivalence
\begin{align}\label{E:BE}
[\mathsf{MF}(P_n, f) ] \xrightarrow{\sim} D^{sg}(P_n/(f)).
\end{align}
\end{thm}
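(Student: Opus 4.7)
The plan is to factor the equivalence \eqref{E:BE} through the stable category $\underline{\MCM}(R)$ of maximal Cohen--Macaulay modules over the hypersurface ring $R := P_n/(f)$. Since hypersurfaces are Gorenstein, $\underline{\MCM}(R)$ carries a natural triangulated structure as the stable category of a Frobenius exact structure whose projective-injective objects are the free modules. The proof then splits into two classical pieces: an equivalence $[\mathsf{MF}(P_n,f)]\simeq \underline{\MCM}(R)$ going back to Eisenbud, and Buchweitz's equivalence $\underline{\MCM}(R)\simeq D^{sg}(R)$.

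For the first equivalence I would construct a cokernel functor. A matrix factorization $(\varphi,\psi)$ consisting of maps $\varphi\colon F\to G$, $\psi\colon G\to F$ between finitely generated free $P_n$-modules satisfying $\varphi\psi=\psi\varphi=f\cdot\id$ produces the $R$-module $M:=\cok(\bar\varphi)$, where $\bar\varphi\colon F/fF\to G/fG$ is the reduction. The 2-periodic exact complex $\cdots\to G/fG\xrightarrow{\bar\psi}F/fF\xrightarrow{\bar\varphi}G/fG\to M\to 0$ exhibits $M$ as MCM over $R$. One checks that null-homotopies of matrix factorizations correspond exactly to $R$-module maps factoring through free modules, yielding a well-defined functor on the stable/homotopy quotients. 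Essential surjectivity uses Eisenbud's lifting: every MCM module over the hypersurface $R$ has an eventually 2-periodic minimal free resolution whose boundary maps lift to matrices over the regular ring $P_n$ whose compositions equal $f\cdot\id$ (since $f$ annihilates $M$). Full faithfulness reduces to a direct comparison of morphism spaces modulo the respective notions of triviality, and the triangulated compatibility is automatic once one identifies both $\mathsf{MF}(P_n,f)$ and $\MCM(R)$ as Frobenius categories with matching projective-injective classes (trivial factorizations versus free modules).

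For the second equivalence I would invoke Buchweitz's theorem: for any Gorenstein ring $R$ the composition $\MCM(R)\hookrightarrow D^b(R)\twoheadrightarrow D^{sg}(R)$ descends to a triangle equivalence $\underline{\MCM}(R)\xrightarrow{\sim} D^{sg}(R)$. The essential inputs are that every bounded complex over a Gorenstein ring admits a finite MCM approximation, and that $\Perf(R)\cap \MCM(R)$ consists precisely of the free modules, which are exactly the objects killed in the stable category. Composing with the previous equivalence then yields \eqref{E:BE}.

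The main obstacle is bookkeeping for the triangulated structures: the shift on $[\mathsf{MF}(P_n,f)]$ swaps $\varphi$ and $\psi$ (up to sign), and this must be matched with the syzygy-inverse $\Omega^{-1}$ on $\underline{\MCM}(R)$, which in turn must be matched with the shift in $D^{sg}(R)$. Once both sides are identified as stable categories of Frobenius categories, this compatibility reduces to tracking the cokernel construction through distinguished triangles -- routine in principle but technically the most delicate step.
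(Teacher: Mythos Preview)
Your outline is the standard route and is correct, but note that the paper does not actually prove Theorem~\ref{T:BuchweitzEisenbud}: it is quoted as a classical result with references to Buchweitz and Eisenbud, and no argument is given in the text. What you have sketched is precisely the proof contained in those references---Eisenbud's cokernel functor $[\mathsf{MF}(P_n,f)]\to\underline{\MCM}(R)$ followed by Buchweitz's equivalence $\underline{\MCM}(R)\simeq D^{sg}(R)$ for Gorenstein rings---so there is no discrepancy of approach to discuss.
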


\noindent
The singularity category $D^{sg}(R)$ admits a canonical dg enhancement given by the dg quotient category $D^{sg}_{dg}(R)=D^b_{dg}(R)/\Perf_{dg}(R)$ \cite{Keller, Drinfeld}, where $D^b_{dg}(R)$ is the canonical dg enhancement of $D^b(R)$, which induces a dg enhancement $\Perf_{dg}(R)$ of $\Perf(R)$. There is a quasi-equivalence of dg categories lifting the triangle equivalence \eqref{E:BE}, cf. e.g. \cite[6.6.4]{Booth}
\begin{align}\label{E:BElift}
\mathsf{MF}(P_n, f)  \xrightarrow{\sim} D^{sg}_{dg}(P_n/(f)).
\end{align}
Using this one can show that Kn{\"o}rrer's equivalences \eqref{E:Knoerrer} can be lifted to quasi-equivalences between dg singularity categories. 
These equivalences preserve the parity of the Krull dimension of the singularities, which leads to the following natural question \cite{KellerEmail}.

\begin{question}[Keller \& Shinder]\label{Q:KellerShinder} Assume that $f \in P_n$ defines an isolated singularity and let $g \in P_m$ with $n \not\equiv m (\mod 2)$. Do quasi-equivalences $D^{sg}_{dg}(P_n/(f)) \cong D^{sg}_{dg}(P_{m}/(g))$ exist ?
\end{question}

\noindent
Our main theorem gives a negative answer to this question.

More generally, we give a complete classification of singularities $S$, which have dg singularity categories admitting quasi-equivalences to $D^{sg}_{dg}(P_n/(f))$.

\begin{thm}\label{T:MAIN}
Let $R \cong P_d/(f)$ be an isolated hypersurface singularity and let $S$ be a commutative complete local Noetherian $\C$-algebra of Krull dimension $e$.

Then the following statements are equivalent.
\begin{itemize}
\item[(a)] There is a $\C$-linear quasi-equivalence between dg singularity categories 
\begin{align}\label{E:Quasi}
D_{dg}^{sg}(R) \cong D_{dg}^{sg}(S).
\end{align}
\item[(b)] There is an $n \in \Z_{\geq 0}$ and an algebra isomorphism $S \cong P_e/(g)$, such that
\begin{align}
\lvert d-e \rvert=2n  \qquad \text{and} \qquad   g -f = z_1^2 + \cdots + z_{2n}^2. 
\end{align}
In particular, $S$ and $R$ are stably equivalent singularities in the sense of Arnol'd.
\end{itemize}
\end{thm}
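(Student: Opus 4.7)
The implication (b) $\Rightarrow$ (a) follows by iterating a dg lift of Kn{\"o}rrer periodicity. Combining Theorem~\ref{T:Knoerrer} with \eqref{E:BElift} produces a quasi-equivalence
\[
D^{sg}_{dg}(P_k/(h)) \xrightarrow{\sim} D^{sg}_{dg}(P_{k+2}/(h + z_{k+1}^2 + z_{k+2}^2))
\]
for each admissible $k$ and $h$; applying this $n$ times yields \eqref{E:Quasi}, and the stable equivalence of singularities in Arnol'd's sense is then a matter of unravelling definitions.

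The nontrivial direction is (a) $\Rightarrow$ (b), which I would split according to whether $d = e$ or $d \neq e$. In the equal-dimension case, the conclusion is exactly the categorical Mather--Yau theorem of Hua \& Keller cited in the abstract: the quasi-equivalence \eqref{E:Quasi} forces $S \cong P_d/(g)$ with $g$ and $f$ right-equivalent, so (b) holds with $n = 0$. For $d \neq e$ I would establish in succession: (i) $S$ is itself a hypersurface, say $S \cong P_e/(g)$; (ii) $d \equiv e \pmod{2}$, so that $\lvert d-e\rvert = 2n$ for some $n \geq 1$; and (iii) after applying $n$ iterations of Kn{\"o}rrer on the side of lower Krull dimension to reach a matching dimension, Hua--Keller applies to the resulting quasi-equivalence between dg singularity categories of hypersurfaces of the same dimension, yielding a coordinate change identifying $g$ with $f + z_{d+1}^2 + \cdots + z_{e}^2$.

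The categorical invariants driving (i) and (ii) are $2$-periodicity and the Serre functor of the dg singularity category. The category $D^{sg}_{dg}(R)$ satisfies $[2] \simeq \id$ by classical periodicity of matrix factorizations, so the same must hold for $D^{sg}_{dg}(S)$ by transport along \eqref{E:Quasi}. Combined with a suitable periodicity characterization of hypersurfaces among complete local Noetherian algebras, this should yield (i). For (ii), the Gorenstein isolated singularity $R$ of Krull dimension $d$ equips $D^{sg}_{dg}(R)$ with a Serre functor isomorphic to $[d-1]$; reducing modulo the $2$-periodicity this becomes $\id$ or $[1]$ according to the parity of $d$. Since the Serre functor is a quasi-equivalence invariant, the parities of $d$ and $e$ must coincide.

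The principal obstacle I anticipate is step (i): showing that \emph{any} complete local Noetherian $\C$-algebra $S$ whose dg singularity category is quasi-equivalent to that of a hypersurface must itself be a hypersurface. One must transport from the $R$-side not only the $2$-periodicity but enough additional structure (isolated singularity locus, Serre dualizability, and hence the Gorenstein property of $S$) so that a classical Eisenbud-type theorem on $2$-periodic resolutions can be invoked to force a hypersurface presentation of $S$.
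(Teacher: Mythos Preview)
Your overall architecture matches the paper's: step~(i) is exactly Corollary~\ref{C:Hyper}(c), the endgame via Hua--Keller is the same, and (b)$\Rightarrow$(a) is handled identically.

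There is, however, a genuine gap in your parity step~(ii). Comparing Serre functors under the equivalence yields $[d-1]\cong[e-1]$, hence $[d-e]\cong\id$; combined with $[2]\cong\id$ this gives $[1]\cong\id$ whenever $d\not\equiv e\pmod 2$. You then conclude that the parities agree, tacitly assuming $[1]\not\cong\id$ --- but this is not automatic (for $R=\C[z]/(z^2)$ one has $[1]\cong\id$ in $D^{sg}(R)$). So the Serre-functor comparison alone does not decide parity, and since your step~(iii) (matching dimensions via Kn\"orrer, which shifts only by~$2$) presupposes~(ii), the argument is circular as written.

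The paper sidesteps this by reversing the order of the last two steps. After~(i) one has $S\cong P_e/(g')$, and Hua--Keller gives an isomorphism of \emph{Tyurina algebras} $T_R\cong T_S$. The key observation is that the Tyurina algebra is unchanged by adding \emph{any} number of squares (Lemma~\ref{L:Tyurina}), so with $R'=P_e/(f+z_{d+1}^2+\cdots+z_e^2)$ one obtains $T_{R'}\cong T_S$ regardless of the parity of $e-d$, and the classical Mather--Yau theorem in dimension~$e$ then gives $R'\cong S$. Only \emph{after} this identification does the paper exclude odd $e-d$, via Proposition~\ref{P:Knoerrer}, whose proof supplies precisely the ingredient your argument is missing: Kn\"orrer's result that $D^{sg}(P_{n+1}/(h+z^2))$ always contains an indecomposable $Y$ with $Y[1]\not\cong Y$, contradicting $[1]\cong\id$. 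You can repair your route by inserting this into~(ii), but the Tyurina-algebra detour is what allows the paper to defer parity to the end and avoid the circularity.
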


\begin{rem}
Theorem \ref{T:MAIN} shows in particular that there are only countably many isomorphism classes of commutative complete Noetherian $\C$-algebras with dg singularity categories quasi-equivalent to dg singularity categories $D_{dg}^{sg}(P_d/(f))$ for isolated hypersurface singularities $f$. 

This is in stark contrast to the non-commutative case: there is an uncountable family of pairwise non-Morita equivalent complete Noetherian $\C$-algebras with singularity categories triangle equivalent to $D^{sg}(P_{2d}/(f))$ for all ADE-singularities $f \in P_{2d}$ except $E_8$, see \cite{KIWY15}. For $A_1$-singularities $f=x_0^2 + \cdots + x_{2d}^2$, it is known (e.g.~by \cite{KW}) that these triangle equivalences lift to quasi-equivalences between dg singularity categories. 
\end{rem}

For singularities of different Krull dimensions this result can be improved.

\begin{cor}\label{C:MAIN}
Let $R=P_n/I$ be a complete local isolated Gorenstein singularity and let $S$ be a commutative complete local Noetherian $\C$-algebra such that \begin{align} \krdim S \neq \krdim R. \end{align} If there is a $\C$-linear \emph{triangle} equivalence
\begin{align}\label{E:Tria}
D^{sg}(R) \cong D^{sg}(S).
\end{align}
then $R\cong P_d/(f)$ and $S \cong P_e/(g)$ are hypersurfaces. 

In particular, the statements (a) and (b) of Theorem \ref{T:MAIN} are also equivalent for Gorenstein algebras $R=P_n/I$, provided that $\krdim S \neq \krdim R$.
\end{cor}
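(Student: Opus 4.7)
The final assertion of the corollary, namely that (a) and (b) of Theorem~\ref{T:MAIN} stay equivalent when $R$ is merely Gorenstein (of different Krull dimension from $S$), follows immediately from the first claim: any quasi-equivalence $D_{dg}^{sg}(R)\cong D_{dg}^{sg}(S)$ specializes to a $\C$-linear triangle equivalence $D^{sg}(R)\cong D^{sg}(S)$, the first claim then forces both $R$ and $S$ to be hypersurfaces, and Theorem~\ref{T:MAIN} applies. So the real content is to prove the first claim.

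The starting point is Auslander--Reiten--Buchweitz theory. Since $R$ is an isolated Gorenstein singularity of Krull dimension $d$, Buchweitz's equivalence $D^{sg}(R)\simeq\underline{\MCM}(R)$ together with Auslander's AR-duality realizes $D^{sg}(R)$ as a Hom-finite, $(d-1)$-Calabi--Yau triangulated category with Serre functor $\cs\cong\Sigma^{d-1}$. A $\C$-linear triangle equivalence transports these features to $D^{sg}(S)$, which is then Hom-finite and admits $\Sigma^{d-1}$ as a Serre functor.

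Next I would translate these categorical properties back into commutative algebra. Hom-finiteness of $D^{sg}(S)$ forces $S$ to have an isolated singularity by a standard application of Auslander's criterion; and, for a complete local Noetherian ring with isolated singularity, the existence of a Serre functor on $D^{sg}(S)$ characterizes Gorensteinness, via the identification of the Serre functor with a dualizing complex datum in the spirit of Iyama--Reiten and van den Bergh. Thus $S$ is Gorenstein isolated of Krull dimension $e$, and its own canonical Serre functor on $\underline{\MCM}(S)=D^{sg}(S)$ is $\Sigma^{e-1}$. Uniqueness of Serre functors then yields $\Sigma^{d-e}\cong\id$ on $D^{sg}(S)$; since $d\neq e$, the suspension is periodic.

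Finally I would convert this periodicity into the hypersurface conclusion. By Eisenbud, $\Sigma^k M\cong M$ for every MCM module $M$ forces each such $M$ to have an eventually periodic minimal free resolution, hence bounded Betti numbers. For complete local Gorenstein rings the growth results of Gulliksen--Avramov characterize hypersurfaces precisely by bounded Betti numbers of all MCM modules, so $R$ and $S$ are both hypersurfaces, and Theorem~\ref{T:MAIN} concludes. The main obstacle I anticipate is not the periodicity-to-hypersurface step, which is classical, but the clean extraction of Gorensteinness of $S$ from the existence of a Serre functor on $D^{sg}(S)$; this needs either a precise form of the Iyama--Reiten / van den Bergh characterization, or a more hands-on dualizing-complex argument tailored to the isolated, complete local setting.
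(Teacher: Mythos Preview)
Your overall architecture matches the paper's: transport Hom-finiteness and the Serre functor along the equivalence, deduce $\Sigma^{d-e}\cong\id$ from uniqueness of Serre functors, and then invoke Eisenbud/Gulliksen to force hypersurfaces; finally feed this into Theorem~\ref{T:MAIN}. The paper packages exactly this chain into Corollary~\ref{C:Hyper}(b) (via Theorems~\ref{T:Auslander}, \ref{T:EisenbudGulliksen}, \ref{T:Hyper}).

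The one substantive deviation is precisely the step you flagged as the obstacle: you propose to deduce Gorensteinness of $S$ from the \emph{existence of a Serre functor} on $D^{sg}(S)$, citing Iyama--Reiten/van den Bergh style characterizations. The paper avoids this entirely and instead extracts Gorensteinness directly from \emph{Hom-finiteness}: Theorem~\ref{T:Auslander} shows that $\Hom_{D^{sg}(S)}(k,k)\cong\ExtTV^0_S(k,k)$ being finite-dimensional forces $S$ to be Gorenstein, by Avramov--Veliche \cite[6.4]{AV}. Since you already have Hom-finiteness in hand, this route is available to you and dissolves your worry. Relatedly, your ordering (``isolated singularity via Auslander, then Gorenstein via Serre functor'') is slightly awkward, because Auslander's criterion lives in $\ul{\MCM}(S)$ and presupposes the Gorenstein/CM setting; the paper's Theorem~\ref{T:Auslander} handles both properties simultaneously from Hom-finiteness. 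With that adjustment your argument coincides with the paper's.
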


\begin{rem}
Question \ref{Q:KellerShinder} has a positive answer for certain \emph{group graded} singularity categories of some hypersurface singularities \cite{HIMO} and for certain \emph{non}-Gorenstein cyclic quotient singularities in dimension $2$ and $3$, see \eqref{E:K21}. 
\end{rem}

\noindent
We give a list of all known (cf. \cite{Matsui}) non-trivial triangulated singular equivalences between commutative complete local Noetherian $\C$-algebras. We use the following notation: for a primitive $n$th root of unity $\epsilon_n \in \C$, we define cyclic subgroups of order $n$ in $\GL(\mm{m}, \C)$
\begin{align}
{\frac{1}{n}(a_1, \ldots, a_m)}=\left\langle \mathsf{diag}\left(\epsilon_n^{a_1}, \ldots \epsilon_n^{a_m}\right)
\right\rangle \subset \GL(\mm{m}, \C), \text{ where } a_i \in \Z_{>0}.
\end{align}
The invariant rings under the diagonal action on $\C\llbracket z_1, \ldots, z_m \rrbracket$ are denoted by
\begin{align}
\C\llbracket z_1, \ldots, z_m \rrbracket^{\frac{1}{n}(a_1, \ldots, a_m)}.
\end{align}

\begin{itemize}
\item[(a)] Iterating Kn{\"o}rrer's equivalences \eqref{E:Knoerrer} for $0 \neq f \in P_n$ yields
\begin{align}
D^{sg}(P_{n}/(f)) \cong D^{sg}(P_{n+2m}/(f+z_1^2 + \ldots + z_{2m}^2)).
\end{align}
These are the only known non-trivial equivalences involving Gorenstein singularities -- which in view of Theorem \ref{T:MAIN} and Corollary \ref{C:MAIN} is maybe not so surprising. 
\item[(b)] Relative singularity category techniques \cite{KY16, KY18}, yield singular equivalences \cite{YangPrivate}  \begin{align}\label{E:Yang}
D^{sg}\left(\C\llbracket y_1, y_2\rrbracket^{\frac{1}{n}(1, 1)}\right) \cong D^{sg}\left(\frac{\C[z_1, \ldots, z_{n-1}]}{(z_1, \ldots, z_{n-1})^2}\right).
\end{align}
These equivalences can also be deduced from \cite{Kawamata15}.
In \cite{KK17}, we give another proof of \eqref{E:Yang} \mm{and also construct (noncommutative) finite dimensional algebras $K_{n, a}$} that generalize \mm{\eqref{E:Yang}} to all cyclic quotient surface singularities \mm{$\C\llbracket y_1, y_2\rrbracket^{\frac{1}{n}(1, a)}$}.
\item[(c)] The following are the only known singular equivalences that do not preserve the parity of the Krull dimension, see \cite{K21}.
\begin{align}\label{E:K21}
D^{sg}\left(\C\llbracket x_1, x_2, x_3\rrbracket^{\frac{1}{2}(1, 1, 1)}\right) \cong D^{sg}\left(\C\llbracket y_1, y_2\rrbracket^{\frac{1}{4}(1, 1)}\right) \cong  D^{sg}\left(\frac{\C[z_1, \ldots, z_{3}]}{(z_1, \ldots, z_{3})^2}\right).
\end{align}
\end{itemize}

\begin{rem}
The triangle equivalences listed above can be lifted to quasi-equivalences between dg singularity categories, cf. Theorem \ref{T:MAIN} for (a) and \cite{KW} for (b) and (c). We do not know any examples of singularity categories, which are triangle equivalent and \emph{not} quasi-equivalent as dg categories.
\end{rem}

\noindent
\emph{Acknowledgement.} I am very grateful to Bernhard Keller and Evgeny Shinder for asking me the question that started this work. Moreover, discussions with Evgeny Shinder were of great help for proving the main theorem. I would like to thank the anonymous referee of \cite{K21} for pointing out the graded singular equivalences in \cite{HIMO}, which led me to Proposition \ref{P:Knoerrer}. I am grateful to Nils Carqueville and Zhengfang Wang for feedback on a preliminary version of this article.

\section{Consequences of triangle equivalences between singularity categories}

\noindent
In this section, we collect results about singularity categories and \emph{triangle} equivalences between them.
These statements are either known or follow from combinations of well-known results. Our main references for singularity categories is \cite{Buchweitz}, see also \cite{Yoshino} for Gorenstein singularities.

The triangulated singularity category detects Gorenstein isolated singularities.

\begin{thm}\label{T:Auslander}
Let $(R, \mathfrak{m}, k)$ be a complete local commutative Noetherian $k$-algebra with $k=R/\mathfrak{m}$. 
Then the following statements are equivalent.
\begin{enumerate}
\item[(a)] $R$ is Gorenstein and has an isolated singularity.
\item[(b)] $D^{sg}(R)$ is Hom-finite over $k$, i.e. $\Hom_{D^{sg}(R)}(M, N)$ is a finite dimensional $k$-vector space for all $M, N \in D^{sg}(R)$.
\end{enumerate}
Moreover, if these equivalent conditions are satisfied, then the category $D^{sg}(R)$ has a Serre functor, which is given by the shift functor $[d-1]$.
\end{thm}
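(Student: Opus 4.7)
The plan is to reduce every assertion to Buchweitz's identification $\underline{\MCM}(R) \xrightarrow{\sim} D^{sg}(R)$, available whenever $R$ is Gorenstein, combined with Auslander's classical results on MCM modules over isolated Cohen--Macaulay singularities.

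For $(a) \Rightarrow (b)$, having $R$ Gorenstein allows me to invoke Buchweitz's theorem \cite{Buchweitz}, so that $\Hom_{D^{sg}(R)}(M,N)$ can be rewritten as the stable Hom $\underline{\Hom}_R(M,N)$ for $M, N \in \MCM(R)$. Auslander's theorem (see \cite{Yoshino}) for CM isolated singularities then tells us that these stable Hom modules have finite length over $R$, hence finite dimension over $k$. Granting the equivalence, the Serre-functor claim follows from Auslander--Reiten duality for $\underline{\MCM}(R)$ in the Gorenstein isolated setting: the AR translation is identified with a suitable power of the syzygy functor $\Omega$, and after composing with the suspension one obtains exactly the shift $[d-1]$.

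For $(b) \Rightarrow (a)$ I would handle the two conclusions in turn. \emph{Isolated singularity}: argue by contrapositive. If some non-maximal prime $\mathfrak{p}$ satisfies that $R_\mathfrak{p}$ is not regular, then the exact functor $D^{sg}(R) \to D^{sg}(R_\mathfrak{p})$ induced by localization is nonzero on a finitely generated $R$-module $M$ (for instance a sufficiently high syzygy of $R/\mathfrak{p}$). The natural $R$-action on $\End_{D^{sg}(R)}(M)$ survives localization at $\mathfrak{p}$, so its image is a nonzero $R_\mathfrak{p}$-module; since $\dim R/\mathfrak{p} > 0$, this image cannot embed into a finite-dimensional $k$-vector space. \emph{Gorenstein}: this is more subtle. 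One shows that Hom-finiteness of $D^{sg}(R)$, together with the already-established isolated-singularity property, forces the existence of a Serre functor on $D^{sg}(R)$, which via the dg enhancement and depth computations in turn forces $R$ to admit a canonical module concentrated in a single cohomological degree --- equivalently, $R$ is Gorenstein.

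The main obstacle is this last step, the Gorenstein part of $(b) \Rightarrow (a)$: deducing a point-like dualizing module from purely triangulated/Hom-theoretic data requires either Iyama--Reiten/Iyama--Takahashi-type structural results on Gorenstein singularities or duality arguments on the dg enhancement $D^{sg}_{dg}(R)$, rather than an elementary argument. By contrast, the isolated-singularity direction reduces cleanly to localization, and the Serre-functor identification is a direct computation once both halves of (a) are in place.
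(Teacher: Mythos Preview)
Your argument for $(a)\Rightarrow(b)$ and for the Serre functor coincides with the paper's. Your isolated-singularity half of $(b)\Rightarrow(a)$ via the localization functor $D^{sg}(R)\to D^{sg}(R_{\mathfrak p})$ is a legitimate alternative to what the paper does; the paper instead establishes Gorensteinness \emph{first} and then gets the isolated-singularity conclusion for free from the same Buchweitz--Auslander equivalence used in the forward direction.

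The genuine gap is the Gorenstein step, and you have essentially flagged it yourself. The sketch you give --- deduce a Serre functor from Hom-finiteness plus isolatedness, then read off a one-term dualizing complex via the dg enhancement --- is not an argument: the standard constructions of Serre duality on $D^{sg}(R)$ (Auslander--Reiten duality, Iyama--Reiten, etc.) already \emph{assume} $R$ is Gorenstein, so invoking them here is circular, and it is not clear how one would extract Gorensteinness from an abstract Serre functor on $D^{sg}(R)$ without that input. The paper bypasses all of this with a single concrete computation: one identifies
\[
\Hom_{D^{sg}(R)}(k,k)\;\cong\;\ExtTV^{\,0}_R(k,k)
\]
with the stable (Tate--Vogel) cohomology of the residue field, and then applies the theorem of Avramov--Veliche \cite[6.4]{AV} that finite-dimensionality of $\ExtTV^{\,0}_R(k,k)$ forces $R$ to be Gorenstein. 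This is the missing idea in your proposal; once you have it, the rest of $(b)\Rightarrow(a)$ is immediate from Buchweitz's equivalence and Auslander's Hom-finiteness criterion, and your separate localization argument becomes unnecessary.
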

\begin{proof}
If $R$ is Gorenstein, Buchweitz shows a triangle equivalence \cite{Buchweitz} 
\begin{align}\label{E:Buchweitz}
D^{sg}(R) \cong \ul{\MCM}(R).
\end{align}
By work of Auslander \cite{Aus84}, the latter category is Hom-finite if and only if $R$ has an isolated singularity. This shows that (a) implies (b) and in view of \eqref{E:Buchweitz} it also shows that for the implication $(b) \Rightarrow (a)$ it is enough to prove that $R$ is Gorenstein. 

Let $\ExtTV^0_R(k, k)$ be the stable cohomology (in the sense of \cite{AV}) of the $R$-module $k=R/\mathfrak{m}$. There is an isomorphism of $k$-vector spaces
\begin{align}
\ExtTV^0_R(k, k) \cong \Hom_{D^{sg}(R)}(k, k),   
\end{align} 
cf.~\cite[1.4.2]{AV}\footnote{More precisely, for finitely generated $R$-modules $M, N$ there are isomorphisms 
\begin{align}
\Hom_{D^{sg}(R)}(M, N) \cong \lim_n \ul{\Hom}_R(\Omega^n(M), \Omega^n(N)) \cong \lim_n \Ext^n_R(M, \Omega^n(N)) \cong \ExtTV^0_R(M, N), 
\end{align}
where the first isomorphism follows from \cite[Corollary 3.9(1)]{Bel00} and the last isomorphism is \cite[Lemma 5.1]{YoshinoHalf}, which uses the notation $\ExtCheck^i_R(M, N)$ for the stable cohomology $\widehat{\Ext}^i_R(M, N)$ and $\Omega_n$ for the $n$th syzygy.
}. Therefore, if $D^{sg}(R)$ is Hom-finite, then $\ExtTV^0_R(k, k)$ is a finite dimensional $k$-vector space, which shows that $R$ is Gorenstein by \cite[6.4]{AV}.

The statement about the Serre functor follows from \eqref{E:Buchweitz} and \cite{Aus78}. 
 \end{proof}
 
 \begin{lem}\label{L:Cohen}
Let $(R, \mathfrak{m})$ be a complete local commutative Noetherian $\C$-algebra. 

Consider the following statements.
\begin{itemize}
\item[(a)] There is a $\C$-linear equivalence $D^{sg}(R) \cong D^{sg}(S)$, where $S \cong P_n/I$ defines an isolated Gorenstein singularity.
\item[(b)] $D^{sg}(R)$ is Hom-finite over $\C$. 
\item[(c)] $R/\mathfrak{m} \cong \C$.
\item[(d)] $R \cong P_m/J$.
\end{itemize}
The following implications hold: $(a) \Rightarrow (b)$ and  $(c)  \Rightarrow (d)$. Moreover,  if $\gldim R = \infty$, then $(b) \Rightarrow (c)$.
\end{lem}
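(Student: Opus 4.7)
The plan is to treat the three implications separately, in roughly increasing order of difficulty.

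For $(a)\Rightarrow(b)$, I would first note that $S\cong P_n/I$ automatically has residue field $\C$, so Theorem~\ref{T:Auslander} directly gives Hom-finiteness of $D^{sg}(S)$ over $\C$. Since a $\C$-linear triangle equivalence identifies $\Hom$-spaces as $\C$-vector spaces, this finiteness transfers to $D^{sg}(R)$.

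For $(c)\Rightarrow(d)$, I would invoke Cohen's structure theorem: a complete local Noetherian $\C$-algebra with residue field $\C$ is equicharacteristic and admits a coefficient field, so $R$ is a quotient of a power series ring $\C\llbracket x_1,\ldots,x_m \rrbracket$ for some $m$, which after a harmless reindexing is of the form $P_{m-1}/J$.

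The main content is $(b)\Rightarrow(c)$ under the assumption $\gldim R = \infty$. My plan is to use $k := R/\mathfrak{m}$ itself as a test object in $D^{sg}(R)$. Since $R$ is local, infinite global dimension is equivalent to $\mathrm{pd}_R k = \infty$ by Auslander--Buchsbaum--Serre, so $k\notin\Perf(R)$ and hence $k$ is nonzero in $D^{sg}(R)$; equivalently, $\id_k$ represents a nonzero element of $\End_{D^{sg}(R)}(k)$. Combining this with the canonical $\C$-algebra homomorphism $k\cong\End_R(k)\to\End_{D^{sg}(R)}(k)$, which sends $1$ to $\id_k$, I obtain a nonzero ring map out of the field $k$, necessarily injective. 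Hypothesis $(b)$ then forces $[k:\C]<\infty$, and since $\C$ is algebraically closed this gives $k\cong\C$.

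The main (modest) obstacle is $(b)\Rightarrow(c)$; within it, the crucial input is the survival of $k$ as a nonzero object in $D^{sg}(R)$, which rests on the classical characterization of regular local rings via the residue field. Once that input is secured, the remainder is a one-line dimension count exploiting the field structure of $k$.
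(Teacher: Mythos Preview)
Your proposal is correct and follows essentially the same approach as the paper: both use Theorem~\ref{T:Auslander} for $(a)\Rightarrow(b)$, the Cohen structure theorem for $(c)\Rightarrow(d)$, and for $(b)\Rightarrow(c)$ the observation that $\gldim R=\infty$ forces $k=R/\mathfrak{m}$ to survive in $D^{sg}(R)$, so that the nonzero $\C$-finite space $\End_{D^{sg}(R)}(k)$ is also a $k$-vector space, whence $[k:\C]<\infty$ and $k\cong\C$. Your version is slightly more explicit in exhibiting the embedding $k\hookrightarrow\End_{D^{sg}(R)}(k)$, but the content is identical.
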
 
\begin{proof}
Our assumption $S \cong P_n/I$ implies $S/\mathfrak{m} \cong \C$. In combination with Theorem \ref{T:Auslander} this shows that $(a) \Rightarrow (b)$. We show that $(b)$ \& $\gldim R = \infty$ imply $(c)$. If $\gldim R = \infty$, then $\Hom_{D^{sg}(R)}(R/\mathfrak{m}, R/\mathfrak{m}) \neq 0$. By assumption $(b)$, this is a finite dimensional $\C$-vector space. It is also a $R/\mathfrak{m}$-vector space.
This shows that the field extension $\C \subseteq R/\mathfrak{m}$ is finite, which gives $\C \cong R/\mathfrak{m}$ as $\C$ is algebraically closed.
The implication $(c) \Rightarrow (d)$ is part of the Cohen structure theorem (cf.~e.g.~\cite[\href{https://stacks.math.columbia.edu/tag/032A}{Theorem 032A}]{stacks-project}).
\end{proof}

The singularity category also detects hypersurface singularities.
 
 \begin{thm}\label{T:EisenbudGulliksen}
Let $(R, \mathfrak{m})$ be a complete local Noetherian Gorenstein $\C$-algebra, s.th. $R/\mathfrak{m} \cong \C$.
Then the following statements are equivalent.
\begin{itemize}
\item[(a)] $R \cong P_n/(f)$ is a hypersurface singularity.
\item[(b)] The shift functor of $D^{sg}(R)$ satisfies $[n] \cong \id$ for some $n \in \Z \setminus \{0\}$.
\item[(c)] The shift functor of $D^{sg}(R)$ satisfies $[2] \cong \id$.
\end{itemize}
\end{thm}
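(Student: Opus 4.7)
\noindent \emph{Proof plan.} I would prove the cycle $(a) \Rightarrow (c) \Rightarrow (b) \Rightarrow (a)$, with the first implication coming from Eisenbud's matrix factorization theorem and the last from a Gulliksen-type characterization of hypersurfaces via bounded Betti numbers.

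For $(a) \Rightarrow (c)$ I would combine Buchweitz's equivalence $D^{sg}(R) \cong \ul{\MCM}(R)$ recalled in \eqref{E:Buchweitz} with Eisenbud's description of MCM modules over a hypersurface as matrix factorizations: the minimal free resolution of any MCM module becomes $2$-periodic after finitely many steps, so the syzygy operator satisfies $\Omega^2 \cong \id$ on $\ul{\MCM}(R)$. Since $\Omega$ realises the inverse shift $[-1]$ in the stable category of any Frobenius exact category, this gives $[2] \cong \id$ on $D^{sg}(R)$. The implication $(c) \Rightarrow (b)$ is immediate by taking $n = 2$.

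The main work goes into $(b) \Rightarrow (a)$. If $\gldim R < \infty$ then $R$ is regular and one may take $f = 0$, so we may assume $\gldim R = \infty$. Then $k = R/\mathfrak{m} = \C$ has infinite projective dimension and is a nonzero object of $D^{sg}(R)$ whose Betti numbers $\beta_i(k) = \dim_\C \Ext^i_R(k, k)$ are nonzero for all $i \geq 0$. Using the identification of Hom-spaces in the singularity category with Tate cohomology $\Hom_{D^{sg}(R)}(k, k[i]) \cong \ExtTV^i_R(k, k)$ (as in the footnote to Theorem~\ref{T:Auslander}), the assumption $[n] \cong \id$ forces $\ExtTV^{i+n}_R(k,k) \cong \ExtTV^i_R(k,k)$ for all $i \in \Z$. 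For $i$ large enough the Tate and ordinary $\Ext$ agree, so the sequence $(\beta_i(k))_{i \geq 0}$ is eventually $n$-periodic, in particular \emph{bounded}.

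At this point I would invoke the classical result of Gulliksen (and Eisenbud in the Gorenstein case) that a local Noetherian ring whose residue field has bounded Betti numbers is a hypersurface; combined with Lemma~\ref{L:Cohen} this gives $R \cong P_m/(f)$ for some $m$ and some $f$. The main obstacle I foresee is purely bibliographic: ensuring that the precise form of the Gulliksen/Eisenbud theorem applies to our setting (complete local Noetherian Gorenstein $\C$-algebra, boundedness rather than eventual periodicity of Betti numbers) and citing it crisply. Once that input is in place, the rest of the argument is short and formal.
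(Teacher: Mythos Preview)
Your proof is correct and follows the same cycle $(a)\Rightarrow(c)\Rightarrow(b)\Rightarrow(a)$ as the paper. For $(a)\Rightarrow(c)$ the paper simply invokes the Buchweitz--Eisenbud equivalence \eqref{E:BE} with the built-in $2$-periodicity of matrix factorizations, which is exactly your syzygy argument rephrased; for $(b)\Rightarrow(a)$ the paper cites \cite[Remark~6.5.12]{Booth} (building on Gulliksen) as a black box, whereas you unpack essentially the same argument: periodicity of $\ExtTV^\bullet_R(k,k)$ forces the Betti numbers of $k$ to be bounded, and then Gulliksen's characterization of hypersurfaces applies. One cosmetic point: in the regular case you should take $f$ to be a variable rather than $f=0$, since $P_n/(0)=P_n$ has the wrong Krull dimension.
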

\begin{proof}
If $R$ is a hypersurface singularity, then $D^{sg}(R)$ is equivalent to a homotopy category of matrix factorizations by Theorem \ref{T:BuchweitzEisenbud}. By definition, the latter category has a shift functor satisfying $[2] \cong \id$. This shows that (a) implies (c). The implication (c) to (b) is clear.
If the shift functor satisfies $[n] \cong \id$ for some $n \in \Z \setminus \{0\}$, then $R$ is a hypersurface by \cite[Remark 6.5.12]{Booth}, which builds on work of Gulliksen \cite{Gulliksen}. Since $R/\mathfrak{m} \cong \C$, it follows from Lemma \ref{L:Cohen} that $R \cong P_n/(f)$. 
\end{proof}

 \begin{thm}\label{T:Hyper}
Let $(R, \mathfrak{m})$ be a complete local commutative Noetherian Gorenstein $\C$-algebra of Krull dimension $d$ with an isolated singularity.
Let $\cf \neq 0$ be a Hom-finite $\C$-linear triangulated category with Serre functor $\mathbb{S}_\cf$ satisfying 
\begin{align}\label{E:fractCY}
\mathbb{S}_\cf^n \cong [m] \quad \text{ for } \ (n, m) \in \Z_{>0} \times \Z \setminus \{(n , n(d-1))\}. 
\end{align}
If there is a $\C$-linear equivalence of triangulated categories
\begin{align}\label{E:fractional}
D^{sg}(R) \cong \cf,
\end{align}
then $R \cong P_d/(f)$ is a hypersurface. 

\end{thm}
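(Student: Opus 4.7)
The plan is to transport the fractional Calabi--Yau property from $\cf$ to $D^{sg}(R)$ via the equivalence, and then combine it with the Serre functor formula of Theorem \ref{T:Auslander} to obtain a nontrivial periodicity of the shift on $D^{sg}(R)$, which can then be fed into Theorem \ref{T:EisenbudGulliksen}.

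First I would verify the hypotheses needed to apply Theorem \ref{T:EisenbudGulliksen} at the end. Since $\cf \neq 0$, the equivalence forces $D^{sg}(R) \neq 0$, so $R$ is not regular and hence $\gldim R = \infty$. Because $D^{sg}(R) \cong \cf$ is Hom-finite over $\C$, Lemma \ref{L:Cohen} then yields $R/\mathfrak{m} \cong \C$. Combined with the standing hypothesis that $R$ is complete local Gorenstein with an isolated singularity, this puts us in the setting of Theorem \ref{T:EisenbudGulliksen}.

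Next, Theorem \ref{T:Auslander} provides a Serre functor on $D^{sg}(R)$ given by $[d-1]$. Serre functors are preserved by $\C$-linear triangle equivalences (being uniquely characterised by the bifunctorial isomorphism $\Hom(X, Y)^* \cong \Hom(Y, \mathbb{S}X)$), so transporting along $D^{sg}(R) \cong \cf$ identifies $\mathbb{S}_\cf$ with the shift $[d-1]$ on the $\cf$-side, and hence $\mathbb{S}_\cf^n \cong [n(d-1)]$. Combining this with the hypothesis $\mathbb{S}_\cf^n \cong [m]$ gives $[n(d-1)-m] \cong \id$ on $\cf$, and therefore on $D^{sg}(R)$. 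Since the excluded pair is $(n, n(d-1))$, the integer $k := n(d-1) - m$ is nonzero. Theorem \ref{T:EisenbudGulliksen} then implies that $R \cong P_d/(f)$ is a hypersurface singularity, where the Krull dimension is $d$ by assumption.

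The argument is a clean chain of implications; there is no genuine obstacle beyond correctly assembling the previous results. The only point requiring mild care is to ensure all hypotheses of Theorem \ref{T:EisenbudGulliksen} hold, which is why the initial step of reducing to $R/\mathfrak{m} \cong \C$ via Lemma \ref{L:Cohen} (using $\cf \neq 0$) must come before invoking the periodicity criterion.
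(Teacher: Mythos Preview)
Your proof is correct and follows essentially the same approach as the paper: transport the Serre functor via uniqueness, compare $[n(d-1)]$ with $[m]$ to get a nontrivial shift periodicity, and then invoke Theorem~\ref{T:EisenbudGulliksen} after checking $R/\mathfrak{m}\cong\C$ via Lemma~\ref{L:Cohen} using $\cf\neq 0$. The only difference is cosmetic ordering---you front-load the verification of the hypotheses for Theorem~\ref{T:EisenbudGulliksen}, whereas the paper checks $R/\mathfrak{m}\cong\C$ as a closing remark.
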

\begin{proof}
By \cite{BondalKapranov}, Serre functors are unique up to isomorphism. Combining the equivalence \eqref{E:fractional} with the fact that $D^{sg}(R)$ has Serre functor $[d-1]$ (Theorem \ref{T:Auslander}) shows that $\mathbb{S}_\cf \cong [d-1]$. Now \eqref{E:fractCY} yields natural isomorphisms in $D^{sg}(R)$
\begin{align}
[m] \cong  \mathbb{S}_\cf^n \cong [n(d-1)] \quad \Rightarrow \quad [m - n(d-1)] \cong \id,
\end{align}
where $m - n(d-1) \neq 0$ by assumption. Therefore, the statement follows from Theorem \ref{T:EisenbudGulliksen} -- note that  $R/\mathfrak{m} \cong \C$ by the implication (b) $\Rightarrow$ (c) in
Lemma \ref{L:Cohen}, where we use that $\cf \neq 0$ implies $\gldim R = \infty$.
\end{proof}

Using the results above we can now collect known implications of \emph{triangle} equivalences between singularity categories of commutative $\C$-algebras.  
\begin{cor}\label{C:Hyper}
Let $R=P_n/I$ and let $S$ be a commutative complete local Noetherian $\C$-algebra. If there is a $\C$-linear triangle equivalence 
\begin{align}
D^{sg}(R) \cong D^{sg}(S),
\end{align}
then the following statements hold
\begin{itemize}
\item[(a)] $R$ is Gorenstein and has an isolated singularity if and only if $S$ has these properties.
In particular, if these equivalent conditions are satisfied, then $S \cong P_m/J$.
\item[(b)] If $R$ is a Gorenstein isolated singularity and $\krdim R \neq \krdim S$, then $R$ and $S$ are hypersurfaces.
\item[(c)] $R$ is a isolated hypersurface singularity if and only if $S$ is a isolated hypersurface singularity.
If these equivalent conditions are satisfied, then $S \cong P_m/(g)$.
\end{itemize}
\end{cor}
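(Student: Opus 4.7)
The plan is to assemble the three building blocks established in this section---Theorems~\ref{T:Auslander}, \ref{T:EisenbudGulliksen}, \ref{T:Hyper} together with Lemma~\ref{L:Cohen}---using the observation that each of the three properties appearing in (a), (b), (c) is detected by an intrinsic invariant of the triangulated category $D^{sg}$: Gorenstein isolated by Hom-finiteness, the form $P_m/J$ by the residue field being $\C$, and the hypersurface property by periodicity of the shift functor. Since the given triangle equivalence is $\C$-linear, all such invariants transfer between $D^{sg}(R)$ and $D^{sg}(S)$, so the corollary is essentially a matter of bookkeeping.

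For (a), I would begin from $R$ Gorenstein isolated. Since $R=P_n/I$ gives $R/\mathfrak{m}_R\cong \C$, Theorem~\ref{T:Auslander} yields that $D^{sg}(R)$ is Hom-finite over $\C$; the $\C$-linear equivalence passes this to $D^{sg}(S)$. Setting aside the degenerate case $\gldim S<\infty$ (where both singularity categories vanish and the statement is vacuous), Lemma~\ref{L:Cohen} supplies $S/\mathfrak{m}_S\cong \C$ and $S\cong P_m/J$, and then a second application of Theorem~\ref{T:Auslander} promotes $S$ to Gorenstein isolated. The converse direction is symmetric. For (c), assuming $R$ is an isolated hypersurface, Theorem~\ref{T:EisenbudGulliksen} yields $[2]\cong\id$ on $D^{sg}(R)$, a property that transfers across the equivalence to $D^{sg}(S)$; combined with $S\cong P_m/J$ from (a), the same theorem identifies $S$ as a hypersurface.

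Part (b) is the only step with real content, and I would deduce it directly from Theorem~\ref{T:Hyper}. By (a), $S$ is Gorenstein isolated with $S\cong P_m/J$; set $e:=\krdim S$. Theorem~\ref{T:Auslander} equips $\cf:=D^{sg}(S)$ with Serre functor $[e-1]$, so $\mathbb{S}_\cf^1\cong[e-1]$ with $(1,e-1)\neq (1,1\cdot(d-1))$ precisely because $d\neq e$; hence the hypothesis \eqref{E:fractCY} of Theorem~\ref{T:Hyper} is satisfied, and the theorem produces $R\cong P_d/(f)$. Exchanging the roles of $R$ and $S$ and running the same argument with the Serre functor $[d-1]$ on $D^{sg}(R)$ gives $S\cong P_e/(g)$.

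I do not anticipate a genuine obstacle: the proof is structural, essentially a compilation of what has already been established. The one subtlety to mind is the regular-ring edge case, which must be separated out so that the hypothesis $\gldim=\infty$ of Lemma~\ref{L:Cohen} is actually in force at the moment it is invoked in (a).
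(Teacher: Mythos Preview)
Your proposal is correct and follows essentially the same route as the paper: part (a) via Theorem~\ref{T:Auslander} combined with Lemma~\ref{L:Cohen}, part (b) via Theorem~\ref{T:Hyper} applied to $\cf=D^{sg}(S)$ with $(n,m)=(1,e-1)$, and part (c) via Theorem~\ref{T:EisenbudGulliksen} on top of (a). You are in fact a bit more careful than the paper in two places---you explicitly separate out the regular edge case before invoking Lemma~\ref{L:Cohen}, and you spell out the role-swap in (b) needed to conclude that $S$ (and not just $R$) is a hypersurface---both of which the paper leaves implicit.
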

\begin{proof}
Part (a) follows from the categorical characterization of Gorenstein isolated singularities in Theorem \ref{T:Auslander} together with the implications (a) $\Rightarrow$ (c) \& (d) in Lemma \ref{L:Cohen}.
Part (b) is a consequence of part (a) in combination with Theorem \ref{T:Hyper} applied to $\cf=D^{sg}(S)$ with $(n, m)=(1, e-1)$. Since hypersurface singularities are Gorenstein, part (c) follows from part (a) together with the categorical characterization of hypersurface singularities in Theorem \ref{T:EisenbudGulliksen}. 
\end{proof}

\begin{rem}
More generally, if a complete local Gorenstein algebra $S$ is (triangle) singular equivalent to a complete intersection $R=P_n/(f_1, \ldots, f_c)$ of codimension $c$, then $S$ is isomorphic to a complete intersection $P_n/(g_1, \ldots, g_c)$ of codimension $c$, cf. \cite{Puthenpurakal}.
\end{rem}

\noindent
Theorem \ref{T:Hyper} and Corollary \ref{C:Hyper} also have consequences for the existence of singular equivalences involving finite dimensional associative $\C$-algebras.

\begin{cor}\label{C:NC}
Let $R$ be a commutative complete local Noetherian Gorenstein $\C$-algebra of Krull dimension $d > 0$ and let $A$ be a finite dimensional connected associative $\C$-algebra.
If there is a $\C$-linear triangle equivalence 
\begin{align}
D^{sg}(R) \cong D^{sg}(A),
\end{align}
then the following statements hold
\begin{itemize}
\item[(a)] If $A$ is commutative, then $R$ is a hypersurface and $A \cong \C[x]/(x^n)$.
\item[(b)] If $A$ is symmetric, i.e. $A \cong \Hom_\C(A, \C)$ as $A$-$A$-bimodules, then $R$ is a hypersurface. 
\end{itemize}
\end{cor}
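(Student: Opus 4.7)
The plan is to apply Theorem \ref{T:Hyper} to $\cf = D^{sg}(A)$. The key observation is that in both parts $A$ is (or may be assumed to be) a self-injective finite-dimensional $\C$-algebra, so that $D^{sg}(A) \cong \underline{\mod}A$ is Hom-finite and its Serre functor is computed by the standard formula $\mathbb{S} \cong \tau\,[1] \cong \nu\,\Omega^{2}\,[1] \cong \nu\,[-1]$, where $\nu$ denotes the Nakayama functor and $\Omega = [-1]$ in the stable category.

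For part (b), $A$ symmetric means $\nu \cong \id$, hence $\mathbb{S}_{D^{sg}(A)} \cong [-1]$. Before invoking Theorem \ref{T:Hyper} I first transfer Hom-finiteness of $\underline{\mod}A$ across the equivalence to $D^{sg}(R)$; in the non-trivial case $D^{sg}(R) \neq 0$ (so $\gldim R = \infty$), Lemma \ref{L:Cohen} and Theorem \ref{T:Auslander} then imply $R/\mathfrak{m} \cong \C$ and that $R$ is Gorenstein with an isolated singularity. Now Theorem \ref{T:Hyper} with $(n,m) = (1,-1)$ applies, since the forbidden value $n(d-1) = d-1$ differs from $m = -1$ because $d > 0$; the conclusion is that $R \cong P_d/(f)$ is a hypersurface.

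For part (a), a commutative finite-dimensional connected $\C$-algebra $A$ is local Artinian with residue field $\C$ (since $\C$ is algebraically closed), and in particular $\krdim A = 0 < d$. After the same Hom-finiteness transfer as in (b), Corollary \ref{C:Hyper}(a) shows that $A$ is Gorenstein and has an isolated singularity; any commutative Gorenstein Artinian algebra is Frobenius, and commutative Frobenius algebras are symmetric, so part (b) applies and $R$ is a hypersurface. Corollary \ref{C:Hyper}(c) then transfers the hypersurface property to $A$, giving $A \cong P_m/(g)$ with $m = 1 + \krdim A = 1$; finite-dimensionality forces $g = u\,x^n$ for a unit $u \in \C\llbracket x\rrbracket$, yielding $A \cong \C[x]/(x^n)$. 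The main obstacle, beyond this bookkeeping, is the Serre-functor computation $\mathbb{S}_{\underline{\mod}A} \cong \nu\,[-1]$, a classical fact for self-injective algebras but the only step not supplied directly by Section 2.
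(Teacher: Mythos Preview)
Your proof of (b) is correct and follows the paper's approach: apply Theorem~\ref{T:Hyper} to $\cf = D^{sg}(A)$ with Serre functor $[-1]$ coming from $A$ symmetric (the paper cites \cite{KrauseIyengar20} for this rather than deriving it from $\mathbb{S}\cong\nu[-1]$). Your explicit verification of the side hypotheses via Lemma~\ref{L:Cohen} and Theorem~\ref{T:Auslander} fills in details the paper leaves implicit.

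For (a) the routes differ. The paper invokes Corollary~\ref{C:Hyper}(b) directly: $A$ is a commutative complete local Noetherian $\C$-algebra with $\krdim A = 0 \neq d = \krdim R$, so that corollary forces both $R$ and $A$ to be hypersurfaces, and a zero-dimensional hypersurface is $\C[x]/(x^n)$. You instead try to reduce (a) to (b) by first showing $A$ is symmetric. This detour contains a circularity: your ``same Hom-finiteness transfer as in (b)'' relied there on $A$ being self-injective, so that $D^{sg}(A)\cong\underline{\mod}\,A$ is automatically Hom-finite; in (a), however, self-injectivity of $A$ is precisely what you are trying to deduce (via Gorenstein $\Rightarrow$ Frobenius $\Rightarrow$ symmetric). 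For a non-Gorenstein Artinian local ring such as $\C[x,y]/(x,y)^2$ the singularity category is \emph{not} Hom-finite (Theorem~\ref{T:Auslander}), so the bootstrap cannot start. The paper's one-line Krull-dimension argument avoids this detour entirely---though it is worth noting that Corollary~\ref{C:Hyper}(b) itself presupposes that $R$ has an isolated singularity, a hypothesis absent from the statement of Corollary~\ref{C:NC} and tacitly needed by both arguments.
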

\begin{proof}
Part (a) is a special case of Corollary \ref{C:Hyper} (b). Part (b) follows from Theorem \ref{T:Hyper} (b) applied to $\cf=D^{sg}(A)$, which has Serre functor $[-1]$, see e.g. \cite{KrauseIyengar20}. 
\end{proof}

\noindent
The following result follows from work of Kn{\"o}rrer \cite{KnorrerCMmodules}, cf. \cite[Appendix]{K21} for (a)  $\Rightarrow$ (b).
\begin{prop}\label{P:Knoerrer} 
Let $n \in \Z_{\geq 0}$ and $m \in \Z_{> 0}$. Assume that $0 \neq f \in P_n$ has an isolated singularity. 
Then the following statements are equivalent.
\begin{itemize}
\item[(a)] There is a $\C$-linear triangle equivalence
\begin{align}
D^{sg}(P_{n}/(f)) \cong D^{sg}(P_{n+m}/(f+z_{n+1}^2 + \ldots + z_{n+m}^2)).
\end{align}
\item[(b)] $m$ is even.
\end{itemize} 
\end{prop}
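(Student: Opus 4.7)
The implication (b) $\Rightarrow$ (a) is immediate: applying Kn\"orrer's Theorem \ref{T:Knoerrer} exactly $m/2$ times produces the required triangle equivalence.

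For (a) $\Rightarrow$ (b), both $R = P_n/(f)$ and $S = P_{n+m}/(f + z_{n+1}^2 + \cdots + z_{n+m}^2)$ are isolated Gorenstein hypersurface singularities of Krull dimensions $n$ and $n+m$; by Theorem \ref{T:Auslander} their singularity categories are Hom-finite with Serre functors $[n-1]$ and $[n+m-1]$. Since Serre functors are unique up to isomorphism (Bondal--Kapranov), the assumed equivalence intertwines them and, because shift commutes with any triangle functor, this yields $[m] \cong \id$ on $D^{sg}(R)$. Combined with the $2$-periodicity $[2] \cong \id$ for hypersurfaces (Theorem \ref{T:EisenbudGulliksen}), this forces $[\gcd(m,2)] \cong \id$; if $m$ is even, (b) follows.

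It therefore remains to rule out the case $m$ odd. In this case the above gives $[1] \cong \id$ on both singularity categories. Applying Kn\"orrer $(m-1)/2$ times in reverse to $S$ strips pairs of added squares, reducing (a) to an alleged equivalence $D^{sg}(P_n/(f)) \cong D^{sg}(P_{n+1}/(f + z_{n+1}^2))$. A further round of inverse Kn\"orrer, applied simultaneously on both sides to split off all non-degenerate Morse pairs of $f$, yields a minimal-form pair in which the two rings still differ by exactly one Morse variable: concretely, $D^{sg}(P_{c-1}/(f_{\min}))$ versus $D^{sg}(P_c/(f_{\min}+w^2))$, where $f_{\min}$ is the cusp part of $f$ (lying in $\mathfrak m^3$) and $c$ is its Hessian corank. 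In the degenerate case $c = 0$ (pure-Morse, $f_{\min} = 0$), the pair becomes $D^{sg}(\C[[z]]/(z^2))$ versus $D^{sg}(\C[[z,w]]/(z^2 + w^2))$, already distinguished by their numbers of indecomposable objects.

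The main obstacle is to exhibit a triangulated invariant that distinguishes the two minimal-form categories in the generic case $c \geq 1$. My plan is to track the shift-action on the residue field $k = R/\mathfrak m$: its first syzygy yields $[1]k \cong \mathfrak m$ in $D^{sg}(R)$, and the embedding dimensions of the two minimal rings differ by one. The stable isomorphism class of $\mathfrak m$ relative to $k$ then serves as the distinguishing invariant, preventing $[1] \cong \id$ from holding simultaneously on both sides. The detailed execution of this comparison -- via an analysis of minimal free resolutions and stable $\Hom$-spaces between $k$, $\mathfrak m$, and their iterated syzygies -- is the genuine technical content of \cite[Appendix]{K21}.
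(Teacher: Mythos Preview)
Your reduction to the case $m=1$ and the derivation of $[1]\cong\id$ from the comparison of Serre functors matches the paper's argument. The divergence is in how the contradiction is obtained, and here your proposal both misdescribes the cited reference and sketches an approach that does not obviously work.

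The argument in \cite[Appendix]{K21}, which the paper reproduces, does \emph{not} proceed via minimal free resolutions of $k$, embedding dimensions, or a reduction to the Morse-minimal form $f_{\min}$. It invokes Kn{\"o}rrer's own structural result \cite[Prop.~2.7~i)]{KnorrerCMmodules}: if $X$ is an indecomposable matrix factorization of $f$ with $X[1]\cong X$, then its image under the Kn{\"o}rrer functor is an indecomposable matrix factorization $Y$ of $f+z_{n+1}^2$ with $Y[1]\ncong Y$; moreover $Y$ is nontrivial by \cite[Lemma~2.5~ii)]{KnorrerCMmodules}. Since $[1]\cong\id$ in $D^{sg}(P_n/(f))$ forces every indecomposable $X$ to satisfy $X[1]\cong X$, this immediately produces a nonzero object $Y$ in $D^{sg}(P_{n+1}/(f+z_{n+1}^2))$ with $Y[1]\ncong Y$, contradicting $[1]\cong\id$ there. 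That is the entire argument.

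Your alternative plan --- distinguishing the two categories via the stable isomorphism type of $\mathfrak m$ relative to $k$ and the one-step difference in embedding dimensions --- is not an argument as it stands. A triangle equivalence need not carry $k_R$ to $k_S$, so the behaviour of the specific objects $k$ and $\mathfrak m$ is not a priori an invariant of $D^{sg}$; and the relation $k\cong\mathfrak m$ in $D^{sg}$ (which would follow from $[1]\cong\id$) is not visibly violated in either ring by embedding-dimension considerations alone. You would need to extract a genuine categorical invariant, and your sketch does not indicate what that would be. The passage through the splitting lemma is also unnecessary overhead: the paper's argument applies directly to the original $f$.
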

\begin{proof}
The implication (b) $\Rightarrow$ (a) follows by iterating Theorem \ref{T:Knoerrer}. 

To see the other direction, we assume that $m$ is odd. We first note that by Theorem \ref{T:Knoerrer} it is enough to consider the case $m=1$. This case is shown in \cite[Appendix]{K21}. For the convenience of the reader we repeat the argument. The Serre functors $[n-1]$ and $[n]$ of the equivalent categories $D^{sg}(P_{n}/(f))$ and $D^{sg}(P_{n+1}/(f+z_{n+1}^2 ))$ are isomorphic \cite{BondalKapranov}.  This yields the following natural isomorphism in both categories
\begin{align}
[1] \cong \id.
\end{align} 
In particular,
$X[1]\cong X$ for every indecomposable object $X$ in $D^{sg}(P_{n}/(f))$. It follows from
\cite[Prop. 2.7. i)]{KnorrerCMmodules} that there is
an indecomposable matrix factorization $Y$ of $f+z_{n+1}^2$ such that
$Y[1]\ncong Y$. Since $X$ corresponds to a non-trivial matrix factorization, $Y$ is non-trivial by \cite[Lemma 2.5. ii)]{KnorrerCMmodules} and therefore $Y \not\cong 0$ in $D^{sg}(P_{n+1}/(f+z_{n+1}^2 ))$. This contradicts $[1] \cong \id$ in
$D^{sg}(P_{n+1}/(f+z_{n+1}^2 ))$ and shows that (a) is impossible.
\end{proof}

\section{Proof of Theorem \ref{T:MAIN} and Corollary \ref{C:MAIN}}

\begin{defn}
Let $R=P_n/(f)$ be a hypersurface singularity. The algebra 
\begin{align}
T_R = P_n/(f, \del_0 f, \ldots, \del_n f)
\end{align}  
is called the \emph{Tyurina algebra} of $R$. 
\end{defn}

By definition, the Tyurina algebra is invariant under changing a hypersurface by adding squares in additional variables.
\begin{lem}\label{L:Tyurina}
Let $R=P_n/(f)$ and $S=P_m/(f+z_{n+1}^2 + \ldots + z_m^2)$ be hypersurface singularities. Then there is an isomorphism of algebras
\begin{align}
T_R \cong T_S.
\end{align}
\end{lem}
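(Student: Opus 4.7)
The plan is a direct computation in the power series rings. View $P_n = \C\llbracket z_0, \ldots, z_n \rrbracket$ as a subring of $P_m = \C\llbracket z_0, \ldots, z_m \rrbracket$ and set $g := f + z_{n+1}^2 + \cdots + z_m^2$. First, I would compute the partial derivatives of $g$ with respect to all variables of $P_m$: since $f$ does not involve $z_{n+1}, \ldots, z_m$, one has $\del_i g = \del_i f$ for $0 \leq i \leq n$, whereas $\del_j g = 2 z_j$ for $n+1 \leq j \leq m$. Consequently,
\[
T_S = P_m/\bigl(g,\, \del_0 f, \ldots, \del_n f,\, 2z_{n+1}, \ldots, 2z_m \bigr).
\]

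Second, since $z_{n+1}, \ldots, z_m$ all lie in this defining ideal, the quotient map $P_m \twoheadrightarrow T_S$ factors through $P_m/(z_{n+1}, \ldots, z_m) \cong P_n$. Under this projection $g$ is sent to $f$, and the images of $\del_0 f, \ldots, \del_n f$ are just $\del_0 f, \ldots, \del_n f \in P_n$. This yields an isomorphism
\[
T_S \cong P_n/(f, \del_0 f, \ldots, \del_n f) = T_R,
\]
which is the desired statement.

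There is no real obstacle here beyond bookkeeping: the proof is simply the observation that the Jacobian ideal automatically eliminates the squared variables, reducing the second Tyurina algebra to the first. The only thing to be careful about is the indexing convention $P_k = \C\llbracket z_0, \ldots, z_k \rrbracket$ (so $k+1$ variables and $k+1$ partial derivatives), and the fact that the isomorphism is canonical given the chosen inclusion $P_n \hookrightarrow P_m$.
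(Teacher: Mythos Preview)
Your proof is correct and is exactly the computation the paper has in mind: the lemma is stated without proof, with only the remark that ``by definition, the Tyurina algebra is invariant under changing a hypersurface by adding squares in additional variables.'' You have simply made this explicit.
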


We are ready to prove Theorem \ref{T:MAIN}.
\begin{proof}
We start with showing that (a) implies (b). The quasi-equivalence between dg singularity categories \eqref{E:Quasi} yields a triangle equivalence between singularity categories 
\begin{align}\label{E:SingTriang}
D^{sg}(R) \cong D^{sg}(S).
\end{align}
Since $R$ is an isolated hypersurface singularity the same is true for $S$ by Corollary \ref{C:Hyper} (c).
Now, the quasi-equivalence \eqref{E:Quasi} between dg singularity categories of hypersurfaces $R$ and $S \cong P_e/(g')$, yields an isomorphism between their Tyurina algebras
\begin{align}\label{E:Tyurina}
T_R \cong T_{S},
\end{align}
see \cite{HuaKeller} cf. also \cite[Theorem 6.6.11]{Booth}.

Without loss of generality, assume that $e\geq d$ and define $R'=P_e/(f + z_{d+1}^2 + \cdots z_{e}^2)$. By Lemma \ref{L:Tyurina}, we have \begin{align}\label{E:Tyurina2}
T_R \cong T_{R'}.
\end{align}
Now $R'$ and $S$ are complete local $\C$-algebras of the same Krull dimension $e$, with isomorphic Tyurina algebras (by \eqref{E:Tyurina} and \eqref{E:Tyurina2}). Therefore, the formal version \cite[Prop. 2.1.]{GreuelPham} of the Mather--Yau Theorem \cite{MY}, yields an isomorphism 
\begin{align}
P_e/(f + z_{d+1}^2 + \cdots z_{e}^2) = R' \cong S
\end{align} 
So we can set $g=f + z_{d+1}^2 + \cdots z_{e}^2$.

It remains to show that $e-d$ is even. Assume that $e-d>0$ is odd. Using that $R' \cong S$, \eqref{E:SingTriang}
and Kn{\"o}rrer periodicity \eqref{E:Knoerrer}, we know that 
\begin{align*}
D^{sg}\left(\frac{P_e}{(f + z_{d+1}^2 + \cdots z_{e}^2)}\right) \cong D^{sg}(R') \cong D^{sg}(S) \cong D^{sg}(R) \cong D^{sg}\left(\frac{P_{e-1}}{(f + z_{d+1}^2 + \cdots z_{e-1}^2)}\right)
\end{align*}
But Proposition \ref{P:Knoerrer} shows that a triangle equivalence between singularity categories of hypersurface singularities $P_{n}/(h +z_n^2)$ and $P_{n-1}/(h)$ cannot exist. This contradiction shows that $e-d$ is even and completes the proof of the implication (a) $\Rightarrow$ (b).

The other implication (b) $\Rightarrow$ (a) follows from Kn{\"o}rrer's periodicity result \cite{KnorrerCMmodules}, which is induced by a quasi-equivalence between the corresponding dg singularity categories by work of Orlov \cite{Orlov2, OrlovIdempotent}, cf.~e.g.~\cite[Theorems 1.3 and 1.5]{PavicShinder}.
\end{proof}

Corollary \ref{C:MAIN} follows from Corollary \ref{C:Hyper} (b) in combination with Theorem \ref{T:MAIN}.

\end{document}